\definecolor{codegreen}{rgb}{0,0.6,0}
\definecolor{codegray}{rgb}{0.5,0.5,0.5}
\definecolor{codepurple}{rgb}{0.58,0,0.82}
\definecolor{backcolour}{rgb}{0.95,0.95,0.92}
\definecolor{lightblue}{rgb}{0,0,0.65}
\lstdefinestyle{mystyle}{
    backgroundcolor=\color{backcolour},   
    commentstyle=\color{lightblue},
    keywordstyle=\color{codegreen},
    numberstyle=\tiny\color{codegray},
    stringstyle=\color{codepurple},
    basicstyle=\ttfamily\footnotesize,
    breakatwhitespace=false,         
    breaklines=true,                 
    captionpos=b,                    
    keepspaces=true,                 
    numbers=left,                    
    numbersep=5pt,                  
    showspaces=false,                
    showstringspaces=false,
    showtabs=false,                  
    tabsize=2
}
\tikzset{
    >=stealth',
    punkt/.style={
           rectangle,
           rounded corners,
           draw=black, very thick,
           text width=6.5em,
           minimum height=2em,
           text centered},
    pil/.style={
           ->,
           thick,
           shorten <=2pt,
           shorten >=2pt,}
}
\title{Combinatorial interpretation of the Schlesinger--Zudilin stuffle product}
\date{August 05, 2025.}
\author{Benjamin Brindle$^1$}
\address{$^1$Department of Mathematics, University of Hamburg,\\ Bundesstrasse 55, 20146 Hamburg, Germany.\\ $^1$benjamin.brindle@uni-hamburg.de}
\subjclass[2020]{05A17, 11M32.}
\keywords{Multiple zeta values, partitions.}
\newtheorem{theorem}{Theorem}[section]
\newtheorem{proposition}[theorem]{Proposition}
\newtheorem{lemma}[theorem]{Lemma}
\newtheorem{conjecture}[theorem]{Conjecture}
\numberwithin{equation}{section}
\theoremstyle{definition}
\newtheorem{definition}[theorem]{Definition}
\newtheorem{example}[theorem]{Example}
\newtheorem{remark}[theorem]{Remark}
\newcommand{\Q}{\mathbb{Q}}
\newcommand{\Z}{\mathbb{Z}}
\newcommand{\dep}{\operatorname{depth}}
\newcommand{\len}{\operatorname{len}}
\newcommand{\word}{\mathtt{W}}
\newcommand{\smpart}[1]{\operatorname{sm}\left(#1\right)}
\newcommand{\cmark}[1]{\operatorname{C}\left(#1\right)}
\newcommand{\bone}{{\mathbf{1}}}
\newcommand{\sz}{\zeta_q^\mathrm{SZ}} 
\newcommand{\qmzv}{$q$MZV}
\newcommand{\dd}{d}
\newcommand{\dr}{r}
\newcommand{\one}{\boldsymbol{1}}
\newcommand{\df}{:=}
\newcommand{\QB}{\Q\langle \mathcal{U} \rangle}
\definecolor{dutchorange}{RGB}{220,50,0}
\definecolor{darkblue}{RGB}{0,71,171}
\definecolor{mygray}{RGB}{160,160,160}
\definecolor{mycyan}{RGB}{150,255,255}
\definecolor{darkred}{RGB}{130,0,0}
\tikzset{
    >=stealth',
    punkt/.style={
           rectangle,
           rounded corners,
           draw=black, very thick,
           text width=6.5em,
           minimum height=2em,
           text centered},
    pil/.style={
           ->,
           thick,
           shorten <=2pt,
           shorten >=2pt,}
}
\newcommand{\crpart}[3]{\ensuremath{
		\ytableausetup{smalltableaux}
		\begin{tikzpicture}[inner sep=0in,outer sep=0in,scale=0.34]
			\node (n) 
			{\ydiagram{#1}};
			\foreach \x in {#3}
			\filldraw[fill=red] (n.north west)+(0.5+\x-1,0.3) circle [radius=0.13cm];
			\foreach \y in {#2}	
			\filldraw[fill=green] (n.north west)+(-0.3,-0.5-\y+1) circle [radius=0.13cm];
\end{tikzpicture}}}
\newcommand{\gettikzxy}[3]{%
  \tikz@scan@one@point\pgfutil@firstofone#1\relax
  \edef#2{\the\pgf@x}%
  \edef#3{\the\pgf@y}%
}
\begin{document}
\maketitle

\begin{abstract}
    We derive an explicit formula for the quasi--shuffle product satisfied by Schlesinger--Zudilin Multiple~$q$-Zeta Values, expressed in terms of partition data. To achieve this, we interpret Schlesinger--Zudilin Multiple~$q$-Zeta Values as generating series of distinguished marked partitions, which are partitions whose Young diagrams have certain rows and columns marked. Together with the description of duality using marked partitions in~\cite{Br2}, and Bachmann’s conjecture~(\cite{BaTalk}) that all linear relations among Multiple~$q$-Zeta Values are implied by duality and the stuffle product, this paper completes the description of the conjectural structure of Multiple~$q$-Zeta Values using marked partitions.
\end{abstract}

\section{Introduction}
Multiple~$q$-Zeta Values, or~$q$MZVs for short, can be regarded as generalizations of Multiple Zeta Values (MZVs) as well as (quasi-)modular forms or as generating functions of specific types of partitions. They are~$q$-series that reduce to a Multiple Zeta Value (or a~$\Q$-linear combination of them) in the limit~$q \to 1$, often after modifying the series by multiplying it with a suitable power of~$1-q$. In this paper, we focus on the~$q$MZVs introduced by Schlesinger \cite{Sch} and Zudilin \cite{Zu2}. For an overview of~$q$MZVs, see, for instance,~\cite{Br2}.

In the following, let us consider the set~$\mathcal{U} := \{u_j\mid j\in\Z_{\geq 0}\}$. We refer to~$\mathcal{U}$ as an \emph{alphabet}, and its elements are called \emph{letters}. Furthermore, monomials composed of elements of~$\mathcal{U}$ (with respect to concatenation) are called \emph{words}. The neutral element under concatenation is denoted by~$\bone$, which we call the \emph{empty word}. Let~$\mathcal{U}^\ast$ denote the set of words formed from letters in~$\mathcal{U}$. We define~$\Q\langle\mathcal{U}\rangle$ as the~$\Q$-vector space~$\operatorname{span}_\Q\mathcal{U}^\ast$, which is provided with the non-commutative, but associative multiplication given by concatenation. The \emph{Schlesinger--Zudilin stuffle product}~$\ast\colon\QB\times \QB\to \QB$ is defined via~$\Q$-bilinearity and the recursion
\begin{align*}
    u_{j_1} \word_1 \ast u_{j_2} \word_2 :=  u_{j_1} \left( \word_1 \ast u_{j_2} \word_2\right)+ u_{j_2} \left(u_{j_1} \word_1 \ast \word_2\right) + u_{j_1+j_2}\left(\word_1 \ast \word_2\right)
\end{align*}
for all~$j_1,j_2\in\Z_{\geq 0}$ and~$\word_1,\word_2 \in \mathcal{U}^\ast$, with initial condition~$\one \ast \word = \word \ast \one = \word$ for any~$\word \in \mathcal{U}^\ast$. Note that the usual stuffle product is defined on~$\Q\langle\mathcal{U} \backslash \{u_0\}\rangle \times \Q\langle\mathcal{U} \backslash \{u_0\}\rangle$ only, using the same recursion. Nevertheless, throughout this paper, we refer to the \emph{Schlesinger--Zudilin stuffle product} whenever we use the term \lq\lq stuffle product\rq\rq\, for consistency and better readability. By a result of Hoffman~(\cite{Ho}), it is known that~$(\QB, \ast)$ is an associative and commutative~$\Q$-algebra. Given a word~$\word = u_{k_1}\cdots u_{k_\dr}\in\mathcal{U}^{\ast}$, we call~$\len(\word) := \dr$  its \emph{length} and~$\dep(\word) := \#\{k_j\neq 0\mid 1\leq j\leq \dr\}$ its \emph{depth}. Let~$\mathcal{U}^{\ast,\circ} := \mathcal{U}^\ast \backslash u_0 \mathcal{U}^\ast$ be the set of words in~$\mathcal{U}^\ast$ that do not start with~$u_0$.  Furthermore, denote by~$\QB^\circ$ the corresponding subspace of~$\QB$, i.e., the~$\Q$-vector space generated by words not starting with~$u_0$. Note that~$\ast$ restricts to a map~$\QB^\circ\times \QB^\circ\to \QB^\circ$ which causes a commutative~$\Q$-algebra~$(\QB^\circ,\ast)$, as shown in~\cite{Ho}. Furthermore, the map~$\sz\colon (\QB^\circ,\ast)\to (\Q\llbracket q\rrbracket,\cdot)$ is the~$\Q$-algebra homomorphism (see~\cite{HI}) defined by~$\sz(\bone) = 1$,~$\Q$-linearity, and, with~$m_{\dd+1} := 0$,
\begin{align}
\label{eq:SZ-defAppmp}
    \sz\left(u_{k_1}u_0^{z_1}\cdots u_{k_\dd}u_0^{z_\dd}\right) := \sum\limits_{m_1>\cdots>m_\dd>0} \prod\limits_{j=1}^\dd \binom{m_j-m_{j+1}-1}{z_j} \frac{q^{m_j k_j}}{(1-q^{m_j})^{k_j}},
\end{align}
for any~$k_1,\dots,k_\dd\in\Z_{>0}$ and~$z_1,\dots,z_\dd\in\Z_{\geq 0}$ where~$\dd\in\Z_{>0}$ (note that this definition is not the usual one, like in~\cite{Sin15}, but equivalent to it; this equivalence can be deduced from, for example,~\cite[Theorem 2.18]{Br2}). We denote by~$\mathcal{Z}_q$ the image of~$\sz$ and call elements in~$\mathcal{Z}_q$ \emph{(Schlesinger--Zudilin)-\qmzv s}, abbreviated as (SZ-)\qmzv s. A remarkable property of SZ-\qmzv s is their invariance under the involution~$\tau: \QB^\circ \rightarrow \QB^\circ$, defined by~$\Q$-linearity,~$\tau(\bone) :=\bone$, and
\[\tau\left(u_{k_1} u_0^{z_1} \cdots u_{k_\dd} u_0^{z_\dd}\right) \df u_{z_\dd+1} u_0^{k_\dd-1} \cdots u_{z_1+1} u_0^{k_1-1}\]
for all~$d\in\Z_{>0}$, $k_1,\ldots,k_\dd \in\Z_{\geq 1}, \text{ and } z_1,\dots,z_\dd\in\Z_{\geq 0}$ (see \cite[Theorem 8.3]{Zh};~$\tau$ is often referred to as \emph{duality}). At this point, we highlight the following conjecture by Bachmann regarding the structure of~$\mathcal{Z}_q$ (see~\cite{BaTalk}; a published version can be found in~\cite[Conjecture 1]{Zu}).
\begin{conjecture}[Bachmann]
    \label{conj:MPallrelsqversionApp}
    All~$\Q$-linear relations among elements in~$\mathcal{Z}_q$ are obtained by the stuffle product~$\ast$ and duality~$\tau$.
\end{conjecture}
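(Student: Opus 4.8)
The plan is to reduce the conjecture to a purely combinatorial statement about marked partitions and then attack that statement by a dimension count. First I would make precise the dictionary announced in the abstract: realize each $\sz(\word)$ as the generating series of a set of \emph{distinguished marked partitions} — partitions whose Young diagrams carry marks on prescribed rows and columns — in such a way that concatenation of words in $\mathcal{U}^{\ast,\circ}$ corresponds to a transparent operation on diagrams, and so that the summation over $m_1>\cdots>m_\dd>0$ with the binomial weights in \eqref{eq:SZ-defAppmp} becomes a sum over partitions with the marks recording the indices $k_j$ and the gaps $z_j$. With this dictionary fixed, the next step is to prove an \emph{explicit} combinatorial formula for the stuffle product: expand the defining recursion and show that the three summands $u_{j_1}(\cdots)$, $u_{j_2}(\cdots)$, $u_{j_1+j_2}(\cdots)$ correspond respectively to inserting a new marked part strictly above, strictly below, or on the same level as an existing part, so that $\word_1\ast\word_2$ becomes a signed-free sum over all admissible interleavings of the two marked partitions. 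This is the technical core of the present paper; I expect it to follow by induction on $\len(\word_1)+\len(\word_2)$ once the bookkeeping of the binomial coefficients is organized into the marking data.

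Given the explicit stuffle formula together with the marked-partition description of the duality involution $\tau$ from \cite{Br2}, one then assembles the conjectural structure: the quotient of $\QB^\circ$ by the ideal generated by all stuffle relations and all duality relations should be presented entirely in the language of marked partitions, and the aim is to show that the induced $\Q$-algebra map to $\mathcal{Z}_q$ is an isomorphism. Surjectivity is immediate; the whole content is injectivity, i.e.\ the absence of further relations. The natural route is to exhibit, in each weight (and in each bidegree of the conjectural weight–depth filtration), a spanning family of marked-partition classes modulo the explicit stuffle and duality relations whose cardinality equals an independently known \emph{lower} bound for $\dim\mathcal{Z}_q$ in that degree — for instance a bound coming from a transcendence/period argument, or from a specialization compatible with the conjectural dimensions of the MZV algebra as $q\to 1$.

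The main obstacle is exactly this last step, and it is precisely why the statement remains a conjecture: at present there is no proof of any nontrivial lower bound on the dimensions of the spaces of SZ-$q$MZVs that matches the count of stuffle-and-duality equivalence classes of marked partitions, so injectivity cannot be established unconditionally. What the marked-partition framework does achieve — and what this paper contributes toward it — is to convert the problem into a self-contained combinatorial one (compute the dimension of the span of marked partitions modulo the \emph{explicit} stuffle relations proved here and the \emph{explicit} duality relations of \cite{Br2}) and to make the conjectural Hilbert series manifestly computable. A complete proof would require importing a genuinely new ingredient, namely such a dimension lower bound, from outside the combinatorics of partitions.
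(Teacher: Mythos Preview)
The statement you were asked to prove is Conjecture~\ref{conj:MPallrelsqversionApp}, and the paper does \emph{not} prove it: it is presented as an open conjecture of Bachmann, and the paper's actual main result (Theorem~\ref{thm:mainMP}) only establishes the combinatorial interpretation of the stuffle product via the pairing $\Phi$ on marked partitions. So there is no ``paper's own proof'' to compare against.

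To your credit, you recognize this. Your proposal is not a proof but a programme, and you correctly isolate the missing ingredient: injectivity of the map from the quotient of $\QB^\circ$ by stuffle-and-duality relations onto $\mathcal{Z}_q$ would require a matching lower bound on $\dim\mathcal{Z}_q$, and no such bound is available. That diagnosis is right and is exactly why the statement remains conjectural. Your summary of what the paper \emph{does} accomplish --- turning the stuffle product into an explicit combinatorial operation on marked partitions, complementing the duality description from \cite{Br2} --- is also broadly accurate, though the technical details differ: the paper does not argue by induction on $\len(\word_1)+\len(\word_2)$ directly on the stuffle recursion, but first derives a closed-form recursion peeling off the last block $u_j u_0^n$ (Lemma~\ref{lem:strec0} and Proposition~\ref{prop:mwwwrecursion}), and then proves Theorem~\ref{thm:mainMP} by induction on $\dep(\word)$, showing that the preimage count $m_{\word_1,\word_2;\widehat{p}}$ under $\Phi$ satisfies the identical recursion.

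In short: there is no gap in your reasoning so much as an honest admission that the conjecture is out of reach; your proposal is a fair reconstruction of the paper's \emph{framework}, not of a proof that does not exist.
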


Furthermore, the space~$\mathcal{Z}_q$ contains the Fourier expansion of all quasi-modular forms via the~$q$-expansion of SZ-\qmzv s,
\begin{align*}
    \sz(\word) = \sum\limits_{N\geq 0} \psi_N(\word) q^N,
\end{align*}
where~$\psi_N(\word)$ denotes the~$N$-th Fourier coefficient of~$\sz(\word)$ for any~$\word\in\mathcal{U}^{\ast,\circ}$ (see~\cite{Ba3}). This paper provides a combinatorial framework for studying the Fourier coefficients of~$q$MZVs. Specifically, we interpret each~$\psi_N(\word)$ as a finite sum over so-called \emph{marked partitions}, as introduced in~\cite{Br2}. Moreover, we describe the stuffle product in terms of a pairing on marked partitions.

Let us now introduce marked partitions as in~\cite{Br2}. Consider a partition~$p$ of~$N$ with~$\dd$ distinct parts~$m_j$ having multiplicities~$n_j$. This means we have
\begin{align*}
    m_1>\cdots>m_\dd > 0,\quad n_1,\dots,n_\dd\in\Z_{>0},\quad N=m_1 n_1 + \cdots + m_\dd n_\dd.
\end{align*}
We use dots to mark rows and columns in the Young diagram corresponding to~$p$. If~$k_j$ rows of length~$m_j$ are marked, we call~$(k_1, \dots, k_\dd)$ the \emph{type} of the row marking. A row marking is said to be \emph{distinct} if the lowest row of each length~$m_j$ is marked. A \emph{distinct column marking} of type~$(z_1, \dots, z_\dd)$ for~$p$ is defined as a distinct row marking of type~$(z_\dd, \dots, z_1)$ for the conjugate partition of~$p$. A Young diagram, together with both a distinct row marking and a distinct column marking, is called a \emph{marked partition}. Let~$\widehat{p}$ denote a marked partition with~$(k_1, \dots, k_\dd)$ as the type of its row marking, and~$(z_1, \dots, z_\dd)$ as the type of its column marking. We then associate to the marked partition~$\widehat{p}$ the word~$u_{k_1}u_0^{z_1-1}\cdots u_{k_\dd}u_0^{z_\dd-1}\in\mathcal{U}^{\ast,\circ},$ which we call the \emph{type} of the marked partition~$\widehat{p}$. Given a marked partition~$\widehat{p}$, we will often denote the corresponding partition simply by~$p$.

\begin{example}
\label{ex:firstmp}
    The following is a marked partition of type~$\word = u_2u_0u_0u_1u_1u_0$ of the integer~$N=9\cdot 3 + 5\cdot 2 + 2\cdot 2 = 41$.
    \begin{align*}
        \crpart{9,9,9,5,5,2,2}{1,3,5,7}{1,2,5,6,7,9}
    \end{align*}
\end{example}

\begin{definition}
\label{def:markedpartitionsApp}
    \begin{enumerate}[leftmargin=*]
        \item We interpret~$\emptyset$ as the unique marked partition (of~$N=0$) of type~$\bone$.
        \item For any $\word\in\mathcal{U}^{\ast,\circ}$, we define~$\mathcal{MP}_{\word}$ as the set of all marked partitions of type~$\word$.
        \item We denote by~$\mathcal{MP}:=\bigcup\limits_{\word\in\mathcal{U}^{\ast,\circ}}\mathcal{MP}_{\word}$ the set of all marked partitions.
        \item Given a (marked) partition, we call each part of its Young diagram, obtained by slicing it horizontally below the rows containing corners, a \emph{horizontal block (of the partition/Young diagram).}
    \end{enumerate}
\end{definition}

Note that the marked partition in Example~\ref{ex:firstmp} has three horizontal blocks in the terminology of Definition~\ref{def:markedpartitionsApp}.

One has the following connection of marked partitions and the Fourier coefficient of SZ-\qmzv s.

\begin{proposition}[\cite{Br2}]
    For all~$N\in\Z_{\geq 0}$ and~$\word\in\mathcal{U}^{\ast,\circ}$, we have
    \begin{align} 
        \psi_N(\word) = \#\left\{\widehat{p}\in\mathcal{MP}_\word\, |\, p\text{ is partition of } N\right\}.
    \end{align}
\end{proposition}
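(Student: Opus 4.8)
The statement to prove asserts that the $N$-th Fourier coefficient $\psi_N(\word)$ of the SZ-$q$MZV $\sz(\word)$ equals the number of marked partitions of type $\word$ whose underlying partition is a partition of $N$.

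\medskip

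\textbf{Approach.} The plan is to extract $\psi_N(\word)$ directly from the defining series \eqref{eq:SZ-defAppmp} and to set up an explicit bijection between the index data contributing to the coefficient of $q^N$ and the set of marked partitions of type $\word$ that partition $N$. Write $\word = u_{k_1}u_0^{z_1-1}\cdots u_{k_\dd}u_0^{z_\dd-1}$ with $k_1,\dots,k_\dd\in\Z_{\geq 1}$ and $z_1,\dots,z_\dd\in\Z_{\geq 1}$ (shifting the exponents so that $z_j \geq 1$ matches the convention used to define the type of a marked partition). First I would expand each factor $\dfrac{q^{m_j k_j}}{(1-q^{m_j})^{k_j}} = \sum_{a_j\geq 0}\binom{a_j+k_j-1}{k_j-1} q^{m_j(a_j+k_j)}$, so that the coefficient of $q^N$ in $\sz(\word)$ becomes a sum over tuples $m_1>\cdots>m_\dd>0$, nonnegative integers $a_1,\dots,a_\dd$, and the binomial constraints from \eqref{eq:SZ-defAppmp}, subject to $\sum_j m_j(a_j+k_j) = N$. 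The product of binomial coefficients $\prod_j \binom{m_j-m_{j+1}-1}{z_j-1}\binom{a_j+k_j-1}{k_j-1}$ (with $m_{\dd+1}=0$) is then interpreted as a count of combinatorial choices.

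\medskip

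\textbf{Key steps.} (1) Identify the underlying partition: set $n_j := a_j + k_j$, the multiplicity of the part $m_j$; then $N = \sum_j m_j n_j$ and $m_1 > \cdots > m_\dd > 0$, which is exactly a partition of $N$ with $\dd$ distinct parts, so the "horizontal blocks'' of Definition~\ref{def:markedpartitionsApp} correspond to the indices $j$. (2) Interpret the row-marking data: a distinct row marking of type $(k_1,\dots,k_\dd)$ requires marking the lowest row of length $m_j$ plus $k_j-1$ further rows among the remaining $n_j-1$ rows of that length; the number of ways is $\binom{n_j-1}{k_j-1} = \binom{a_j+k_j-1}{k_j-1}$, matching the factor coming from the geometric-series expansion. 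This also forces $n_j \geq k_j$, i.e.\ $a_j \geq 0$, consistent with the range of summation. (3) Interpret the column-marking data: by definition a distinct column marking of type $(z_1,\dots,z_\dd)$ is a distinct row marking of type $(z_\dd,\dots,z_1)$ on the conjugate partition. The conjugate partition has parts (column heights) whose distinct values are the partial sums $n_\dd, n_\dd+n_{\dd-1},\dots$ occurring with multiplicities $m_\dd - m_{\dd+1}, m_{\dd-1}-m_\dd,\dots$ (using $m_{\dd+1}=0$); marking $z_j$ columns of "conjugate-length'' corresponding to block $j$, with the lowest forced, gives $\binom{m_j - m_{j+1} - 1}{z_j - 1}$ choices, which is precisely the remaining binomial factor in \eqref{eq:SZ-defAppmp}. (4) Check that these choices are independent and that the resulting decorated Young diagram is exactly a marked partition of type $\word$, and conversely that every such marked partition arises from a unique tuple $(m_j, n_j, \text{row choices}, \text{column choices})$; summing over all of them, the total count is $\sum \prod_j \binom{m_j-m_{j+1}-1}{z_j-1}\binom{n_j-1}{k_j-1}$, which equals $\psi_N(\word)$.

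\medskip

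\textbf{Main obstacle.} The routine parts are steps (1)--(2); the delicate point is step (3), namely matching the conjugate-partition bookkeeping with the factor $\binom{m_j - m_{j+1}-1}{z_j}$ in \eqref{eq:SZ-defAppmp} (note the exponent shift between $u_0^{z_j}$ in \eqref{eq:SZ-defAppmp} and $u_0^{z_j-1}$ in the type of a marked partition). One must carefully track which block of the conjugate diagram corresponds to which index $j$ — the ordering is reversed under conjugation — and verify that "the lowest column of each conjugate-length is marked'' translates correctly back to a distinct column marking of the original partition in the sense defined before Example~\ref{ex:firstmp}. Once the indexing conventions are pinned down, the identity is a term-by-term comparison. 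I would phrase the final argument as an explicit bijection $\widehat p \mapsto (m_j, n_j, \dots)$ rather than a generating-function manipulation, since the bijection makes the combinatorial content transparent and sidesteps any convergence subtleties in $\Q\llbracket q\rrbracket$.
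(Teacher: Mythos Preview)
The paper does not actually supply its own proof of this proposition; it simply cites \cite{Br2}. So there is nothing in the present paper to compare your argument against.

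That said, your proposal is correct and is the natural direct argument. Your expansion
\[
\frac{q^{m_j k_j}}{(1-q^{m_j})^{k_j}}=\sum_{n_j\ge k_j}\binom{n_j-1}{k_j-1}q^{m_j n_j}
\]
immediately identifies $(m_1,\dots,m_\dd;n_1,\dots,n_\dd)$ with a partition of $N$ having $\dd$ distinct parts, and the two binomial factors $\binom{n_j-1}{k_j-1}$ and $\binom{m_j-m_{j+1}-1}{z_j-1}$ count, respectively, distinct row markings of type $(k_1,\dots,k_\dd)$ and distinct column markings of type $(z_1,\dots,z_\dd)$, exactly as you describe. Your handling of the reversal under conjugation in step~(3) is the only place one could slip, and you have it right: the conjugate has distinct parts $n_1+\cdots+n_j$ with multiplicities $m_j-m_{j+1}$, so a distinct row marking of type $(z_\dd,\dots,z_1)$ on the conjugate contributes exactly $\prod_j\binom{m_j-m_{j+1}-1}{z_j-1}$. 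The exponent shift $u_0^{z_j}\leftrightarrow u_0^{z_j-1}$ that you flag is purely notational and you have absorbed it correctly. The only omission is the trivial base case $\word=\bone$, where $\sz(\bone)=1$ and $\mathcal{MP}_\bone=\{\emptyset\}$, but this is immediate.
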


To state the main theorem regarding the combinatorial interpretation of the product of SZ-$q$MZVs, we need the following pairing~$\Phi$ on the set of marked partitions.
\begin{definition}
\label{def:Phi}
    The map~$\Phi\colon \mathcal{MP} \times \mathcal{MP} \to \mathcal{MP}$ is defined as follows: Given marked partitions~$\widehat{p_1}$ of~$N_1$ and~$\widehat{p_2}$ of~$N_2$, the marked partition~$\widehat{p} = \Phi(\widehat{p_1}, \widehat{p_2})$ of~$N_1+N_2$ is obtained using the following rules:
    \begin{enumerate}[leftmargin=*]
        \item We set~$\Phi(\emptyset,\widehat{p_2}) := \widehat{p_2}$ and~$\Phi(\widehat{p_1},\emptyset) := \widehat{p_1}$.
        \item Cut the Young diagrams of~$\widehat{p_1}$ and~$\widehat{p_2}$ horizontally below the rows containing corners, separating them into their horizontal blocks. Construct the Young diagram of~$\widehat{p}$ by gluing these horizontal blocks together horizontally. If both~$\widehat{p_1}$ and~$\widehat{p_2}$ contain horizontal blocks of the same length, place the blocks from~$\widehat{p_1}$ above those from~$\widehat{p_2}$ in the new partition.
        \item In (ii), keep the markings of the rows.
        \item If there was a marking in the~$j$-th leftmost column of~$\widehat{p_1}$ or~$\widehat{p_2}$, the~$j$-th leftmost column of~$\widehat{p}$ will be marked as well.
    \end{enumerate}
\end{definition}

\begin{remark}
    Note that the map~$\Phi$ is associative but not commutative. The marked partitions~$\Phi(\widehat{p_1},\widehat{p_2})$ and~$\Phi(\widehat{p_2},\widehat{p_1})$ differ in the row markings at most.
\end{remark}

\begin{example}
Consider the following pair of marked partitions.
\begin{figure}[H]
\centering
\begin{tikzpicture}
    \node (b1) {\crpart{8,8,8,5,5,2,2,2}{1,3,5,8}{1,2,5,8}};
    \node (b2) at ($(b1.east) + (3,-.167)$) {\crpart{9,5,5,5,3,3,3,2,2}{1,2,4,7,9}{2,3,5,7,9}};
    \node (p1) at ($(b1.north) + (0,.3)$) {$\widehat{p_1}$};
    \node (p2) at ($(b2.north) + (0,.3)$) {$\widehat{p_2}$};
\end{tikzpicture}
\end{figure}
We slice them into their horizontal blocks.
\begin{figure}[H]
\centering
\begin{tikzpicture}[every text node part/.style={align=center}]
    \node (left1) {\crpart{8,8,8}{1, 3}{1,2,5,8}};
    \node (left2) at ($(left1.south) + (-.5,-.7)$) {\crpart{5,5}{2}{1,2,5}};
    \node (left3) at ($(left2.south) + (-.5,-.7)$) {\crpart{2,2,2}{3}{1,2}};

    \node (right1) at ($(left1.east) + (3,.33)$) {\crpart{9}{1}{2,3,5,7,9}};
    \node (right2) at ($(right1.south) + (-.66,-.7)$) {\crpart{5,5,5}{1,3}{2,3,5}};
    \node (right3) at ($(right2.south) + (-.33,-.7)$) {\crpart{3,3,3}{3}{2,3}};
    \node (right4) at ($(right3.south) + (-.167,-.7)$) {\crpart{2,2}{2}{2}};
\end{tikzpicture}
\end{figure}
Following the definition of~$\Phi$, we obtain~$\Phi(\widehat{p_1}, \widehat{p_2})$ by sorting the horizontal blocks and rearranging them as described in the rules of Definition~\ref{def:Phi}.
\begin{figure}[H]
\centering
\begin{tikzpicture}
    \node (new1) {\crpart{9}{1}{2,3,5,7,9}};
    \node (new2) at ($(new1.south) + (-.167,-.7)$){\crpart{8,8,8}{1, 3}{1,2,5,8}};
    \node (new3) at ($(new2.south) + (-.5,-.7)$) {\crpart{5,5}{2}{1,2,5}};
    \node (new4) at ($(new3.south) + (0,-.7)$) {\crpart{5,5,5}{1,3}{2,3,5}};
    \node (new5) at ($(new4.south) + (-.33,-.7)$) {\crpart{3,3,3}{3}{2,3}};
    \node (new6) at ($(new5.south) + (-.167,-.7)$) {\crpart{2,2,2}{3}{1,2}};
    \node (new7) at ($(new6.south) + (0,-.7)$) {\crpart{2,2}{2}{2}};
    \node (Ctext) at ($(new1.north) + (0,.3)$) {Horizontal blocks ordered};

    \node (new) at ($0.5*(new1.east) + 0.5*(new7.east) + (4,0)$) {\crpart{9,8,8,8,5,5,5,5,5,3,3,3,2,2,2,2,2}{1,2,4,6,7,9,12,15,17}{1,2,3,5,7,8,9}};
    \node (Phi) at ($(new.north) + (0,.3)$) {$\Phi(\widehat{p_1},\widehat{p_2})$};

    \draw[-stealth] ($(new4.east)+(1,-.54)$) -- ($(new.west)+(-.3,-.3)$);
\end{tikzpicture}
\end{figure}
\end{example}

\begin{definition}
\begin{enumerate}
    \item For~$\word_1,\word_2,\word\in\mathcal{U}^{\ast,\circ}$, we set~$m_{\word_1,\word_2;\word}\in\Z_{\geq 0}$ to be the multiplicity of~$\word$ in~$\word_1\ast\word_2$, i.e., to be the unique integer satisfying
\begin{align*}
    \word_1\ast\word_2 = \sum\limits_{\word\in\mathcal{U}^{\ast,\circ}} m_{\word_1,\word_2;\word}\word.
\end{align*}
\item For~$\word_1,\word_2,\word\in\mathcal{U}^{\ast,\circ}$ and~$\widehat{p}\in\mathcal{MP}_\word$, we define
\begin{align}
    m_{\word_1,\word_2;\widehat{p}} := \#\left\{(\widehat{p_1},\widehat{p_2})\in \mathcal{MP}_{\word_1}\times \mathcal{MP}_{\word_2} \mid \Phi (\widehat{p_1},\widehat{p_2}) = \widehat{p}\right\}.
\end{align}
\end{enumerate} 
\end{definition}
Note that, for fixed~$\word_1,\word_2\in\mathcal{U}^{\ast,\circ}$, all but finitely many~$m_{\word_1,\word_2;\word}$ are zero.
\medskip 

\paragraph{\textbf{Statement of results.}}
The main result of this paper provides a combinatorial interpretation of the stuffle product in terms of marked partitions.
\begin{theorem}
\label{thm:mainMP}
    Let~$\word_1,\word_2,\word\in\mathcal{U}^{\ast,\circ}$ be words. For all~$\widehat{p}\in\mathcal{MP}_{\word}$, we have
    \begin{align}
    \label{eq:mainMP}
    m_{\word_1,\word_2;\widehat{p}} = m_{\word_1,\word_2;\word}.
    \end{align}
    In particular, given~$\word_1,\word_2$,~$m_{\word_1,\word_2;\widehat{p}}$ only depends on the word~$\word$ but not on the marked partition~$\widehat{p}\in\mathcal{MP}_{\word}$.
\end{theorem}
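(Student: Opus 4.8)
The plan is to prove \eqref{eq:mainMP} by induction on $\len(\word_1)+\len(\word_2)$, using the \emph{reflected} form of the stuffle recursion, obtained by peeling off the last letter instead of the first. Since reversing all words is an automorphism of $(\QB,\ast)$ (an easy induction from the given recursion), writing $\word_1=\word_1'u_a$ and $\word_2=\word_2'u_b$ with $\word_1',\word_2'\in\mathcal{U}^{\ast,\circ}\cup\{\one\}$ one has
\[
\word_1'u_a \ast \word_2'u_b \;=\; (\word_1'\ast\word_2'u_b)\,u_a \;+\; (\word_1'u_a\ast\word_2')\,u_b \;+\; (\word_1'\ast\word_2')\,u_{a+b}.
\]
The advantage over the recursion as stated is that deleting the last letter of a word in $\mathcal{U}^{\ast,\circ}$ keeps it in $\mathcal{U}^{\ast,\circ}\cup\{\one\}$, whereas deleting the first letter may produce a word starting with $u_0$. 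The base case $\word_1=\one$ or $\word_2=\one$ is immediate: both sides of \eqref{eq:mainMP} equal $1$ if $\word=\word_2$ (resp.\ $\word=\word_1$) and $0$ otherwise, because $\Phi(\emptyset,-)=\Phi(-,\emptyset)=\id$ and a marked partition determines its type.

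For the inductive step I would first record the structural observation that also yields the last sentence of the theorem. Given $\widehat{p}\in\mathcal{MP}_{\word}$ with horizontal blocks $C_1,\dots,C_\dd$ of row--types $k_1,\dots,k_\dd$, a pair $(\widehat{p_1},\widehat{p_2})\in\mathcal{MP}_{\word_1}\times\mathcal{MP}_{\word_2}$ with $\Phi(\widehat{p_1},\widehat{p_2})=\widehat{p}$ is the same thing as a \emph{distribution datum} on $\widehat{p}$: a choice, for each block $C_j$, among \{send $C_j$ to $\widehat{p_1}$\}, \{send $C_j$ to $\widehat{p_2}$\}, \{cut $C_j$ at one of its $k_j-1$ non--bottom marked rows, upper part to $\widehat{p_1}$, lower part to $\widehat{p_2}$\}; together with, for each marked column of $\widehat{p}$, a choice to mark it in $\widehat{p_1}$, in $\widehat{p_2}$, or in both; subject to the constraints that each column $m_j$ is forced into whichever of $\widehat{p_1},\widehat{p_2}$ meets $C_j$, that every marked column lies in at least one of them, and that the two output marked partitions have types $\word_1,\word_2$. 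The point is that these constraints are formulated purely in terms of $\word_1,\word_2$ and $\word$ — the sizes $m_j$, the multiplicities $n_j$, and the positions of the marked rows and columns never enter — so $m_{\word_1,\word_2;\widehat{p}}$ depends only on $\word$, and it remains to identify this common value with $m_{\word_1,\word_2;\word}$.

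For the identification I run the induction by classifying the distribution data on $\widehat{p}$ according to the fate of the \emph{last slot} of $\widehat{p}$: the bottom block $C_\dd$ when $\word$ ends in a letter $u_c$ with $c\ge 1$, and a distinguished trailing marked column of $\widehat{p}$ when $\word$ ends in $u_0$. This last slot is fed either by the last slot of $\widehat{p_1}$ alone (forcing $a=c$), by that of $\widehat{p_2}$ alone (forcing $b=c$), or by both superimposed (forcing $a+b=c$). Deleting the last slot of $\widehat{p}$ gives a marked partition $\widehat{p}^-$ of type $\word^-$ (that is, $\word$ with its last letter removed) — when $c\ge1$ this means deleting $C_\dd$ entirely \emph{and} unmarking the column $m_\dd$, which is no longer a corner — and deleting the corresponding slot of $\widehat{p_1}$, of $\widehat{p_2}$, or of both turns a distribution datum on $\widehat{p}$ into one on $\widehat{p}^-$ for the pair $(\word_1',\word_2)$, $(\word_1,\word_2')$, or $(\word_1',\word_2')$ respectively. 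This operation is reversible, so the three classes are in bijection with the sets counted by $m_{\word_1',\word_2;\widehat{p}^-}$, $m_{\word_1,\word_2';\widehat{p}^-}$, $m_{\word_1',\word_2';\widehat{p}^-}$ under the respective conditions $a=c$, $b=c$, $a+b=c$; rewriting these via the induction hypothesis as $m_{\word_1',\word_2;\word^-}$, $m_{\word_1,\word_2';\word^-}$, $m_{\word_1',\word_2';\word^-}$ and summing reproduces exactly the right--hand side of the reflected recursion above, i.e.\ $m_{\word_1,\word_2;\word}$.

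The main obstacle is the column bookkeeping in the previous step. Rule~(iv) of Definition~\ref{def:Phi} records only whether a column is marked in $\widehat{p_1}$ \emph{or} in $\widehat{p_2}$, forgetting which; consequently, when $\word$ ends in $u_0$ or when $a=0$ or $b=0$, a trailing marked column of $\widehat{p}$ can legitimately be marked in both components even though the corresponding letter of $\word$ came from only one of $\word_1,\word_2$. Untangling this requires splitting into the cases $a=0$ vs.\ $a\ge1$ (and likewise for $b$ and $c$) and pinning down precisely which marked column of $\widehat{p}$ is "the one cut by the last letter", then checking that "delete that column" is inverse to the reduction. One must also verify that $\widehat{p}\mapsto\widehat{p}^-$ always outputs a genuine marked partition of type $\word^-$ (distinctness of the row and column markings, in particular the unmarking of $m_\dd$), and that the three classes genuinely partition the distribution data on $\widehat{p}$ with each class map a bijection onto the data on $\widehat{p}^-$ of the relevant types. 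A direct bijection between distribution data on $\widehat{p}$ and quasi--shuffles of $\word_1$ and $\word_2$ equal to $\word$ is a viable alternative to the induction, but it carries the same column--marking subtleties.
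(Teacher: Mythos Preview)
Your approach is genuinely different from the paper's, and the difference is instructive. The paper does \emph{not} use the single--letter reflected recursion (your Proposition~\ref{prop:stufflerev}); instead it first derives the coarser recursion of Lemma~\ref{lem:strec0}, which peels off an entire trailing block $u_{j}u_0^{n}$ at once, and then runs the induction on $\dep(\word)$. The reason is exactly the obstacle you flag in your final paragraph: removing the whole bottom horizontal block of $\widehat{p}$ (together with \emph{all} its column markings) corresponds cleanly to removing $u_{j_3}u_0^{n_3}$ from $\word$, whereas removing a single letter forces you to surgically excise one marked column while leaving the rest of that block intact, and the column markings of $\widehat{p_1}$ and $\widehat{p_2}$ can overlap under rule~(iv) of Definition~\ref{def:Phi} in ways that do not respect a letter--by--letter decomposition.

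Your distribution--datum description in the second paragraph is correct in spirit and does yield the ``in particular'' clause directly, but it is missing one constraint: a marked column at position $m$ (say $m_{j+1}<m\le m_j$) can be assigned to $\widehat{p_i}$ only if $\widehat{p_i}$ actually has a row of length $\ge m$, i.e.\ only if $\widehat{p_i}$ received some block $C_{j'}$ with $j'\le j$. Once stated, this constraint is visibly formulated in terms of block indices and $z_j$'s alone, so your independence claim survives. The inductive identification, however, is not complete as written. Concretely, take $c\ge 1$ and $b=0$: then the bottom block $C_d$ of $\widehat{p}$ comes entirely from $\widehat{p_1}$ (so $a=c$), yet the column $m_d$ may or may not be marked in $\widehat{p_2}$. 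These two sub-cases must map to $m_{\word_1',\word_2;\widehat{p}^-}$ and $m_{\word_1',\word_2';\widehat{p}^-}$ respectively, matching the two nonzero terms $[a=c]$ and $[a+b=c]$ in the recursion --- but your ``fed by $\widehat{p_1}$ alone / by both'' dichotomy, as stated, does not make this split, because you have tied ``both'' to the block $C_d$ being physically split rather than to the column $m_d$ being shared. Similar overlaps occur whenever $\{a,b,c\}$ mixes zero and nonzero indices. Each such case can be repaired, but the resulting case analysis is at least as heavy as the paper's, and the paper's choice of the block--wise recursion is precisely what lets it avoid this bookkeeping. A cleaner route along your lines would be to pursue the direct bijection you mention at the end: match distribution data with quasi--shuffles of $\word_1,\word_2$ producing $\word$, block--letter to block--slot and $u_0$--letter to free--column--slot.
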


A remarkable aspect of Theorem~\ref{thm:mainMP} is that, conjecturally, all linear relations among Multiple~$q$-Zeta Values can now be described combinatorially using marked partitions. This follows from Conjecture~\ref{conj:MPallrelsqversionApp} and the fact that duality is already expressible in terms of marked partitions (see~\cite{Br2}). Theorem~\ref{thm:mainMP} completes this picture by providing a combinatorial interpretation of the stuffle product using marked partitions.

\begin{example}
    Consider~$\word_1 = u_1u_0u_1u_0,\, \word_2 = u_2u_0u_0$, and~$\word = u_3u_0u_0u_1u_0$. Note that~$m_{\word_1,\word_2;\word} = 4$. Furthermore, consider
    \begin{figure}[H]
\begin{tikzpicture}
    \node (b1) {\crpart{8,8,8,8,8,3,3,3}{2,3,5,8}{1,3,5,6,8}};
    \node (p1) at ($(b1.west) + (-.4,0)$) {$\widehat{p} =~$};
    \node (p2) at ($(b1.east) + (1.5,0)$) {$\in\mathcal{MP}_{\word}$.};
\end{tikzpicture}
\end{figure}
The pairs~$(\widehat{p_1},\widehat{p_2})\in \mathcal{MP}_{\word_1}\times \mathcal{MP}_{\word_2}$ satisfying~$\Phi (\widehat{p_1},\widehat{p_2}) = \widehat{p}$ are the following:
\begin{figure}[H]
\begin{tikzpicture}[every text node part/.style={align=center}]
    \node (left1) {$\crpart{8,8,3,3,3}{2, 5}{1,3,5,8},\quad \crpart{8,8,8}{1,3}{1,6,8}$};
    \draw ($(left1.west) + (0,0.9)$) arc (160:200:2.6);
    \draw ($(left1.east) + (0,-0.9)$) arc (-20:20:2.6);
    \node at ($(left1.south east) + (.2,.2)$) {,};
    \node (left2) at ($(left1.east) + (4,0)$) {$\crpart{8,8,3,3,3}{2, 5}{1,3,6,8},\quad \crpart{8,8,8}{1,3}{1,5,8}$};
    \draw ($(left2.west) + (0,0.9)$) arc (160:200:2.6);
    \draw ($(left2.east) + (0,-0.9)$) arc (-20:20:2.6);
    \node at ($(left2.south east) + (.2,.2)$) {,};
    \node (left3) at ($(left1.south) + (0,-1.3)$) {$\crpart{8,8,3,3,3}{2, 5}{1,3,5,8},\quad \crpart{8,8,8}{1,3}{5,6,8}$};
    \draw ($(left3.west) + (0,0.9)$) arc (160:200:2.6);
    \draw ($(left3.east) + (0,-0.9)$) arc (-20:20:2.6);
    \node at ($(left3.south east) + (.2,.2)$) {,};
    \node (left4) at ($(left2.south) + (0,-1.3)$) {$\crpart{8,8,3,3,3}{2, 5}{1,3,6,8},\quad \crpart{8,8,8}{1,3}{5,6,8}$};
    \draw ($(left4.west) + (0,0.9)$) arc (160:200:2.6);
    \draw ($(left4.east) + (0,-0.9)$) arc (-20:20:2.6);
    \node at ($(left4.south east) + (.2,.2)$) {.};
\end{tikzpicture}
\end{figure}
In particular, the claim of Theorem~\ref{thm:mainMP} in this case is true due to
\begin{align}
    m_{\word_1,\word_2;\widehat{p}} = 4 = m_{\word_1,\word_2;\word}.
    \end{align}
\end{example}

\medskip

\paragraph{\textbf{Organization of the paper.}} In Section~\ref{sec:stuffleprepare}, we consider a recursion of the stuffle product. This will be the key step of the main theorem in Section~\ref{sec:MPproof} where we show that the numbers~$m_{\word_1,\word_2;\widehat{p}}$ and~$m_{\word_1,\word_2;\word}$ satisfy the same recursion and, therefore, must be equal.

\medskip

\paragraph{\textbf{Acknowledgements.}}  The author thanks Henrik Bachmann, Annika Burmester, Niclas Confurius, and Ulf Kühn for valuable discussions and their comments on an earlier version of this paper. Furthermore, the author thanks the referees for their helpful comments and suggestions.

\section{About the stuffle product}
\label{sec:stuffleprepare}

The following characterization of the stuffle product is equivalent to its definition.

\begin{proposition}
\label{prop:stufflerev}
    Consider~$\word_1,\word_2\in\mathcal{U}^{\ast}$ and~$j_1,j_2\in\Z_{\geq 0}$. Then, we have
    \begin{align*}
        \word_1u_{j_1} \ast \word_2u_{j_2}  =   \left( \word_1 \ast \word_2 u_{j_2}\right) u_{j_1}+ \left(\word_1 u_{j_1} \ast \word_2\right) u_{j_2}  + \left(\word_1 \ast \word_2\right) u_{j_1+j_2}.
    \end{align*}
\end{proposition}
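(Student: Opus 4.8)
The statement is a "reversed" version of the defining recursion for $\ast$: the original recursion peels letters off the \emph{front} of both words, whereas Proposition~\ref{prop:stufflerev} peels them off the \emph{back}. The natural approach is to use the known symmetry of the stuffle product under word-reversal. Let me think about what's available...

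Actually, let me think about whether reversal is a known algebra (anti-)automorphism here. For the usual stuffle product, the reversal map $\mathrm{rev}$ (which reverses a word) is an algebra automorphism: $\mathrm{rev}(w_1 \ast w_2) = \mathrm{rev}(w_1) \ast \mathrm{rev}(w_2)$. The same should hold for the SZ-stuffle since the recursion is symmetric in structure. If I can establish that, then applying $\mathrm{rev}$ to the defining recursion with $w_1 = \mathrm{rev}(\word_1)$, $w_2 = \mathrm{rev}(\word_2)$, $j_1, j_2$ as the \emph{first} letters gives exactly the reversed recursion.

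So the plan is: first prove that $\mathrm{rev}\colon \QB \to \QB$, defined $\Q$-linearly by reversing each word, satisfies $\mathrm{rev}(v \ast w) = \mathrm{rev}(v) \ast \mathrm{rev}(w)$ for all words $v, w$. This is an induction on $\len(v) + \len(w)$: write $v = u_{j_1} v'$ and $w = u_{j_2} w'$, apply the defining recursion, apply $\mathrm{rev}$ termwise (using $\mathrm{rev}(u_k x) = \mathrm{rev}(x) u_k$), and use the inductive hypothesis on the three shorter products; the base cases where $v$ or $w$ is $\bone$ are immediate. Then, given $\word_1, \word_2, j_1, j_2$, apply $\mathrm{rev}$ to the identity $u_{j_1}\mathrm{rev}(\word_1) \ast u_{j_2}\mathrm{rev}(\word_2) = u_{j_1}(\mathrm{rev}(\word_1) \ast u_{j_2}\mathrm{rev}(\word_2)) + u_{j_2}(u_{j_1}\mathrm{rev}(\word_1) \ast \mathrm{rev}(\word_2)) + u_{j_1+j_2}(\mathrm{rev}(\word_1)\ast\mathrm{rev}(\word_2))$, and simplify each term using $\mathrm{rev}(\mathrm{rev}(\word_i)) = \word_i$ and the homomorphism property of $\mathrm{rev}$.

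Alternatively — and perhaps more in the spirit of a short self-contained proof — one can prove Proposition~\ref{prop:stufflerev} directly by induction on $\len(\word_1) + \len(\word_2)$, bypassing reversal entirely: expand $\word_1 u_{j_1}$ as $u_{i_1} \word_1' u_{j_1}$ (if $\word_1 \neq \bone$), apply the \emph{defining} recursion (front version) to the leading letters $u_{i_1}$ and $u_{i_2}$, then apply the inductive hypothesis to rewrite each resulting product in back-peeled form and collect terms. The case distinctions (one or both of $\word_1, \word_2$ empty) are the bookkeeping cost. I'd lean toward the reversal argument since it isolates the one conceptual fact cleanly.

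The main obstacle is simply keeping the three-term bookkeeping straight: the stuffle recursion produces three summands, each of which must be transported through $\mathrm{rev}$ (or through the inductive hypothesis) correctly, and one must be careful that $\mathrm{rev}$ turns a \emph{left} concatenation $u_k(\cdots)$ into a \emph{right} concatenation $(\cdots)u_k$ so that the reversed identity indeed has the $u_{j_1}, u_{j_2}, u_{j_1+j_2}$ appended on the right as claimed. No genuine difficulty is expected beyond this — the content is entirely the structural symmetry of Hoffman's quasi-shuffle recursion.
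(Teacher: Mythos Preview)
Your proposal is correct. Both of your approaches work, and in fact the paper takes your \emph{second} one: it proves the proposition directly by induction on $\len(\word_1)+\len(\word_2)$, applying the front-peeling defining recursion in the induction step (the paper states this in a single sentence without spelling out the bookkeeping). Your preferred reversal argument is a genuinely different route: you first isolate the auxiliary fact that $\mathrm{rev}$ is a $\ast$-algebra automorphism (itself a short induction), and then obtain the proposition in one line by applying $\mathrm{rev}$ to the defining recursion. The reversal approach has the advantage of making the front/back symmetry of the quasi-shuffle recursion explicit as a reusable lemma, whereas the paper's direct induction avoids introducing an auxiliary map at the cost of the three-term case analysis you describe. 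Either way the content is the same length-induction; the difference is only in packaging.
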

\begin{proof}
    The proof proceeds by induction on~$\len(\word_1) + \len(\word_2)$, where the induction step involves the recursive definition of the stuffle product.
\end{proof}

To prepare for the proof of Theorem~\ref{thm:mainMP}, the following recursion for the stuffle product is needed.

\begin{lemma}
    \label{lem:strec0}
     Let~$\word_1',\, \word_2'\in\mathcal{U}^{\ast}$,~$j_1,j_2\in\Z_{>0}$ be words, and~$n_1,n_2\in\Z_{\geq 0}$. Consider
    \begin{align*}
        \word_1 =  \word_1'u_{j_1}u_0^{n_1},\quad \word_2 =  \word_2'u_{j_2}u_0^{n_2}.
    \end{align*}
    We have
    \begin{align*}
        \word_1\ast\word_2 = & \sum\limits_{\substack{0\leq k\leq j\leq n_2\\ 0\leq\varepsilon\leq \min\{1,n_2-j\}}} \binom{n_1+k}{n_1}\binom{n_1}{j-k}  \left(\word_1'\ast \word_2'u_{j_2}u_0^{n_2-j-\varepsilon}\right)u_{j_1}u_0^{n_1+k}
        \\
        &+\sum\limits_{\substack{0\leq k\leq j\leq n_1\\ 0\leq\varepsilon\leq \min\{1,n_1-j\}}} \binom{n_2+k}{n_2}\binom{n_2}{j-k} \left(\word_1'u_{j_1}u_0^{n_1-j-\varepsilon} \ast\word_2'\right)u_{j_2}u_0^{n_2+k}
        \\
        &+\sum\limits_{k=0}^{n_2} \binom{n_1+k}{n_1} \binom{n_1}{n_2-k} (\word_1'\ast\word_2')u_{j_1+j_2}u_0^{n_1+k}.
    \end{align*}
\end{lemma}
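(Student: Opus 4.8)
The plan is to prove the formula by induction on $n_1+n_2$, using Proposition~\ref{prop:stufflerev} (equivalently, the defining recursion of $\ast$) as the only engine. It is worth keeping in mind what the three sums encode: viewing a monomial of $\word_1\ast\word_2$ as a quasi-shuffle of the two letter–sequences, the three sums record the trichotomy at the last positive letters $u_{j_1}$ of $\word_1$ and $u_{j_2}$ of $\word_2$ — whether the output letter produced from $u_{j_1}$ lies to the right of the one produced from $u_{j_2}$, or vice versa, or the two are merged into $u_{j_1+j_2}$; then $j$ counts how many trailing zeros of the other word land at or beyond that last positive letter, $\varepsilon\in\{0,1\}$ records whether one of them is absorbed into the letter itself, and $k$ counts how many of the remaining ones survive as separate zeros instead of being merged into the $n_i$ trailing zeros of the first word. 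This picture is only a bookkeeping aid; the verification is algebraic.

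For the base case $n_1=n_2=0$ we have $\word_1=\word_1'u_{j_1}$ and $\word_2=\word_2'u_{j_2}$, each of the three sums collapses to its single summand with $j=k=\varepsilon=0$, and the identity is exactly Proposition~\ref{prop:stufflerev} applied to $\word_1'u_{j_1}\ast\word_2'u_{j_2}$. For the inductive step I would split into three cases. If $n_1\geq1$ and $n_2\geq1$, write $\word_1=(\word_1'u_{j_1}u_0^{n_1-1})u_0$ and $\word_2=(\word_2'u_{j_2}u_0^{n_2-1})u_0$ and apply Proposition~\ref{prop:stufflerev} with both stripped letters equal to $u_0$: this presents $\word_1\ast\word_2$ as $u_0$ times a sum of three stuffle products of exactly the same shape, with parameter pairs $(n_1-1,n_2)$, $(n_1,n_2-1)$, $(n_1-1,n_2-1)$, each of strictly smaller total, so the induction hypothesis applies to all three. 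If $n_1\geq1$ and $n_2=0$, strip $u_0$ off $\word_1$ and $u_{j_2}$ off $\word_2$; two of the three resulting terms, $(\word_1'u_{j_1}u_0^{n_1}\ast\word_2')u_{j_2}$ and $(\word_1'u_{j_1}u_0^{n_1-1}\ast\word_2')u_{j_2}$, are left alone and are precisely the $j=k=0$, $\varepsilon\in\{0,1\}$ summands of the second sum of the target, while the third term $(\word_1'u_{j_1}u_0^{n_1-1}\ast\word_2'u_{j_2})u_0$ has shape $(n_1-1,0)$ and gets the induction hypothesis. The case $n_1=0$, $n_2\geq1$ is the mirror image: strip $u_{j_1}$ off $\word_1$ and $u_0$ off $\word_2$, the two untouched terms feeding the first sum.

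In each case, after inserting the induction hypothesis, multiplying back by the stripped letter (which shifts the exponent of the trailing $u_0$-block by one), reindexing, and merging the $\varepsilon=0$ and $\varepsilon=1$ contributions, one is reduced to an equality of explicit finite $\Q$-linear combinations of words, checked coefficient by coefficient. Since $j_1,j_2>0$, sorting words by their rightmost nonzero letter separates the third family (rightmost letter $u_{j_1+j_2}$) from the first two and separates the untouched terms above — whose monomials end in $u_{j_1}$ or $u_{j_2}$, not $u_0$ — from the $u_0$-terminated monomials coming from the induction hypothesis, so the comparison can be run family by family; each resulting identity collapses by repeated use of Pascal's rule $\binom{a-1}{b}+\binom{a-1}{b-1}=\binom{a}{b}$. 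For instance, when $n_1,n_2\geq1$, matching the coefficient of a word $v\,u_{j_1+j_2}u_0^{p}$ (with $v$ a monomial of $\word_1'\ast\word_2'$) the multiplicity of $v$ factors off and the three substituted formulas contribute $\binom{p-1}{n_1}\binom{n_1}{X}+\binom{p-1}{n_1-1}\bigl(\binom{n_1-1}{X}+\binom{n_1-1}{X-1}\bigr)$ with $X=n_1+n_2-p$, which two applications of Pascal's rule turn into $\binom{p}{n_1}\binom{n_1}{X}$, exactly what the third sum prescribes; the first two families are handled analogously — slightly more carefully, since there the inner stuffle's first argument also varies with the summation index — again reducing to a handful of applications of Pascal's rule.

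I expect the main obstacle to be purely organizational: keeping the summation ranges, their boundary cases, the bookkeeping of $\varepsilon$, and the $u_0$-exponent shift consistent across the three cases and, in the first case, across the three copies of the formula that appear simultaneously. The algebraic input is only Pascal's rule. The hypotheses $j_1,j_2>0$ are used solely to make the decompositions $\word_i=\word_i'u_{j_i}u_0^{n_i}$ unambiguous and, as above, to keep the families of monomials separated in the comparison; they play no role in the recursion steps themselves.
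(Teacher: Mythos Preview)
Your proposal is correct and follows essentially the same route as the paper: induction on $n_1+n_2$, with Proposition~\ref{prop:stufflerev} as the only engine and Pascal's rule as the binomial tool, handling the boundary cases $n_1=0$ or $n_2=0$ separately. The paper's sketch organizes the induction slightly differently (first establishing the full $n_1=0$ case by induction on $n_2$, then symmetrically, then the generic case), but the substance is identical and you have in fact supplied more detail than the paper does.
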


\begin{proof}
    For $n_1 = 0$, the proof is by induction on~$n_2$. The base case~$n_1 = n_2 = 0$ corresponds to Proposition~\ref{prop:stufflerev}. In the induction step, we first use Proposition~\ref{prop:stufflerev} and then the induction hypothesis to establish the claim. The case~$n_2 = 0$ is handled analogously. For~$n_1, n_2 > 0$, the proof follows by induction on~$n_1 + n_2$, using Proposition~\ref{prop:stufflerev} and appropriately rearranging sums and applying identities for binomial coefficients.
\end{proof}

We write
\begin{align*}
    \delta_{\bullet} := \begin{cases}
        1,\quad \text{if }\bullet\text{ is true,}\\
        0,\quad \text{if }\bullet\text{ is false}
    \end{cases}
\end{align*}
for the Kronecker Delta, as usual. From Lemma~\ref{lem:strec0}, we derive the following recursion for the numbers~$m_{\word_1, \word_2; \word}$.

\begin{proposition}
\label{prop:mwwwrecursion}
    Let~$\word_1,\word_2,\word\in\mathcal{U}^{\ast,\circ}$ be words. \begin{enumerate}
        \item If~$\word_1 = \bone$, we have~$m_{\word_1,\word_2;\word} = \delta_{\word = \word_2}$.
        \item If~$\word_2 = \bone$, we have~$m_{\word_1,\word_2;\word} = \delta_{\word = \word_1}$.
        \item If~$\word = \bone$, we have~$m_{\word_1,\word_2;\word} = \delta_{\word_1=\word_2=\bone}$.
        \item If~$\word_1,\word_2,\word\neq\bone$, write 
        \begin{align*}
        \word_1 =  \word_1'u_{j_1}u_0^{n_1},\quad \word_2 =  \word_2'u_{j_2}u_0^{n_2},\quad \word = \word'u_{j_3} u_0^{n_3}
    \end{align*}
    with unique~$\word_1',\word_2',\word'\in\mathcal{U}^{\ast,\circ}$,~$j_1,j_2,j_3\in\Z_{>0}$, and~$n_1,n_2,n_3\in\Z_{\geq 0}$. Then,
    \begin{align*}
        m_{\word_1,\word_2;\word} 
        =\, & \sum\limits_{\substack{0\leq k\leq j\leq n_2\\ 0\leq\varepsilon\leq \min\{1,n_2-j\}}} \binom{n_1+k}{n_1}\binom{n_1}{j-k}   m_{\word_1',\word_2'u_{j_2}u_0^{n_2-j-\varepsilon};\word'}\delta_{\substack{j_1 = j_3}}\delta_{n_1+k = n_3}
        \\
        &\, +\sum\limits_{\substack{0\leq k\leq j\leq n_1\\ 0\leq\varepsilon\leq \min\{1,n_1-j\}}} \binom{n_2+k}{n_2}\binom{n_2}{j-k}  m_{\word_1'u_{j_1}u_0^{n_1-j-\varepsilon},\word_2';\word'}\delta_{\substack{j_2=j_3}}\delta_{n_2+k = n_3} 
        \\
        &\, +\sum\limits_{k=0}^{n_2} \binom{n_1+k}{n_1} \binom{n_1}{n_2-k} m_{\word_1',\word_2';\word'}\delta_{\substack{j_1+j_2=j_3}}\delta_{n_1+k=n_3}.
    \end{align*}
    \end{enumerate}
\end{proposition}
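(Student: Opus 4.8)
The plan is to handle parts (i)--(iii) by direct inspection and part (iv) by reading off the coefficient of $\word$ from both sides of Lemma~\ref{lem:strec0}. For (i), since $\bone$ is the unit for $\ast$ we have $\word_1\ast\word_2=\bone\ast\word_2=\word_2$, whose coefficient of $\word$ is $\delta_{\word=\word_2}$; part (ii) is the same statement with the factors interchanged. For (iii), the defining recursion of $\ast$ shows that whenever $\word_1$ and $\word_2$ are both nonempty, every word occurring in $\word_1\ast\word_2$ has length at least $1$; hence the coefficient of $\bone$ vanishes unless $\word_1=\word_2=\bone$, in which case $\bone\ast\bone=\bone$, which gives $m_{\word_1,\word_2;\bone}=\delta_{\word_1=\word_2=\bone}$.

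For part (iv), I would first record that the decomposition $\word_1=\word_1'u_{j_1}u_0^{n_1}$ with $\word_1'\in\mathcal{U}^{\ast,\circ}$, $j_1\in\Z_{>0}$, $n_1\in\Z_{\geq0}$ is unique: $u_{j_1}$ must be the last nonzero letter of $\word_1$, $u_0^{n_1}$ its block of trailing zeros, and $\word_1'$ the remaining prefix; since $\word_1\notin u_0\mathcal{U}^\ast$ we also have $\word_1'\notin u_0\mathcal{U}^\ast$, and likewise for $\word_2$ and $\word$. One further checks that $\word_2'u_{j_2}u_0^{n_2-j-\varepsilon}$ and $\word_1'u_{j_1}u_0^{n_1-j-\varepsilon}$ lie in $\mathcal{U}^{\ast,\circ}$ --- the positive-index letter $u_{j_2}$, resp.\ $u_{j_1}$, keeps them out of $u_0\mathcal{U}^\ast$ --- so that every symbol $m_{\cdot,\cdot;\cdot}$ appearing in the statement is well defined. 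Now apply Lemma~\ref{lem:strec0} and extract the coefficient of $\word=\word'u_{j_3}u_0^{n_3}$ from each of its three sums. The one elementary observation that makes this work is that, since $j_1>0$, for any word $v$ the word $vu_{j_1}u_0^{n_1+k}$ has exactly $u_0^{n_1+k}$ as its trailing-zero block and $u_{j_1}$ as its last nonzero letter; hence $vu_{j_1}u_0^{n_1+k}=\word'u_{j_3}u_0^{n_3}$ holds if and only if $n_1+k=n_3$, $j_1=j_3$, and $v=\word'$. Since the coefficient of $\word'$ in $\word_1'\ast\word_2'u_{j_2}u_0^{n_2-j-\varepsilon}$ equals $m_{\word_1',\word_2'u_{j_2}u_0^{n_2-j-\varepsilon};\word'}$ by definition, the first sum of Lemma~\ref{lem:strec0} contributes precisely the first sum in the statement of the proposition. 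Applying the same reasoning to the second sum (using $j_2>0$) and to the third sum (using $j_1+j_2>0$) produces the remaining two sums, and adding the three contributions gives the asserted identity.

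The argument is essentially bookkeeping. The only step that requires any care is the ``last nonzero letter / trailing zeros'' matching just described --- namely, that concatenating an arbitrary word with $u_ju_0^{m}$ for $j>0$ interacts in a unique, predictable way with the decomposition used to index $\mathcal{U}^{\ast,\circ}$ --- together with the routine verification that all words fed into the symbols $m_{\cdot,\cdot;\cdot}$ genuinely belong to $\mathcal{U}^{\ast,\circ}$. I do not expect any genuine obstacle beyond this.
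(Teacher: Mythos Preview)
Your proposal is correct and follows essentially the same approach as the paper: parts (i)--(iii) are immediate from the definition of the stuffle product, and part (iv) is obtained by reading off the coefficient of $\word$ in Lemma~\ref{lem:strec0}. Your explicit discussion of the ``last nonzero letter / trailing zeros'' matching is simply a careful unpacking of what the paper calls an immediate consequence.
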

\begin{proof}
    While (i), (ii), and (iii) are evident following the definition of the stuffle product, (iv) is an immediate consequence of Lemma~\ref{lem:strec0}.
\end{proof}

\section{Proof of our main theorem}
\label{sec:MPproof}
Having established the recursion for the stuffle product in Lemma~\ref{lem:strec0}, we can now proceed to prove our main theorem. The underlying idea is straightforward: We demon- strate, in a combinatorial manner, that the numbers~$m_{\word_1,\word_2;\widehat{p}}$ satisfy the same recursion as the numbers~$m_{\word_1,\word_2;\word}$ (where~$\word$ represents the type of~$\widehat{p}$). This allows the claim to follow by induction on~$\dep(\word)$. To facilitate our combinatorial arguments in the proof of the main theorem, we first introduce some necessary notions.

\begin{definition}
\label{def:mpnotation}
    Let~$\word \in \mathcal{U}^{\ast,\circ}$ and~$\widehat{p} \in \mathcal{MP}_\word$. \begin{enumerate} \item We denote by~$\smpart{\widehat{p}}$ the minimal length of the parts in~$\widehat{p}$.

\item We denote by~$\cmark{\widehat{p}} \subset \{1, \dots, \smpart{\widehat{p}}\}$ the column markings of~$\widehat{p}$ that occur in the horizontal block of minimal length in~$\widehat{p}$. More precisely,~$j \in \cmark{\widehat{p}}$ if and only if the~$j$-th leftmost column in~$\widehat{p}$ has a marking and~$j \leq \smpart{\widehat{p}}$.

\item We denote by~$(\widehat{p})_{-1}$ the marked partition obtained from~$\widehat{p}$ by removing its horizontal block of minimal length~$\smpart{\widehat{p}}$, along with the corresponding row markings. Additionally, any column markings in the~$j$-th leftmost column are removed if~$j \in \cmark{\widehat{p}}$.

\item We denote by~$(\widehat{p})_{1}$ the marked partition consisting of the horizontal block of minimal length~$\smpart{\widehat{p}}$ from~$\widehat{p}$. This includes preserving the row markings, and column markings are retained in the~$j$-th leftmost column if and only if~$j \in \cmark{\widehat{p}}$. 
\end{enumerate}
\end{definition}
Note that we will sometimes use the phrase \lq\lq $m$ is a column marking of~$\widehat{p}$\rq\rq to mean that the~$m$-th leftmost column of~$\widehat{p}$ is marked.

Let us now consider an example to illustrate the notation introduced in Definition~\ref{def:mpnotation}.

\begin{example}
Consider
    \begin{figure}[H]
    \centering
\begin{tikzpicture}
    \node (b1) {\crpart{9,8,8,5,5,2,2,2}{1,3,5,8}{1,2,5,8,9}.};
    \node () at ($(b1.west) + (-0.4,0)$){$\widehat{p} = $};
\end{tikzpicture}
\end{figure}
Using the notation from Definition~\ref{def:mpnotation}, we have~$\smpart{\widehat{p}} = 2$ and~$\cmark{\widehat{p}} = \{1,2\}$. Furthermore, we have

\begin{figure}[H]
\centering
\begin{tikzpicture}
    \node (b1) {\crpart{9,8,8,5,5}{1,3,5}{5,8,9}};
    \node () at ($(b1.west)+(-0.7,0)$) {$(\widehat{p})_{-1} = $};
    \node (b2) at ($(b1.east) + (4,-.167)$) {\crpart{2,2,2}{3}{1,2}.};
    \node () at ($(b2.west)+(-0.5,0)$) {$(\widehat{p})_{1} = $};
    \node () at ($0.5*(b1) + 0.5*(b2)$) {and};
\end{tikzpicture}
\end{figure}
\end{example}

\begin{remark}
\begin{enumerate}
    \item For all~$\word\in\mathcal{U}^{\ast,\circ}$ and~$\widehat{p}\in\mathcal{MP}_\word$, we have
    \begin{align}
        \widehat{p} = \Phi((\widehat{p})_{-1},(\widehat{p})_1).
    \end{align}
    \item Let~$\word\in\mathcal{U}^{\ast,\circ}$ with~$\dep(\word)\geq 1$. Write~$\word = \word'u_j u_0^n$ with~$\word'\in\mathcal{U}^{\ast,\circ}$,~$j\in\Z_{>0}$, and~$n\in\Z_{\geq 0}$ uniquely determined. For all~$\widehat{p}\in\mathcal{MP}_{\word}$, we have~$(\widehat{p})_{-1}\in\mathcal{MP}_{\word'}$.
    \end{enumerate}
\end{remark}

With the additional notation introduced in Definition~\ref{def:mpnotation}, we are now prepared to prove our main theorem, which asserts that~$\Phi$ describes the stuffle product at the level of marked partitions.


\begin{proof}[Proof of Theorem~\ref{thm:mainMP}]
    We begin with the three special cases~$\word_1 =\bone$,~$\word_2 = \bone$,~$\word = \bone$. If~$\word_1 = \bone$, we note that
    \begin{align}
        \word_1\ast\word_2 = \bone\ast\word_2 = \word_2.
    \end{align}
    Therefore, for all~$\word\in\mathcal{U}^{\ast,\circ}$, by Proposition~\ref{prop:mwwwrecursion}(i), we have
    \begin{align*}
        m_{\bone,\word_2;\word} = \delta_{\word = \word_2}.
    \end{align*}
    Furthermore, we have~$\mathcal{MP}_{\word_1} = \mathcal{MP}_\bone = \{\emptyset\}$. This implies for all~$\word\in\mathcal{U}^{\ast,\circ}$ and~$\widehat{p}\in\mathcal{MP}_\word$ that
    \begin{align}
        m_{\word_1,\word_2;\widehat{p}} =&\, \#\left\{(\widehat{p_1},\widehat{p_2})\in \mathcal{MP}_{\word_1}\times \mathcal{MP}_{\word_2} \mid \Phi (\widehat{p_1},\widehat{p_2}) = \widehat{p}\right\}
        \\
        =&\, \#\left\{(\emptyset,\widehat{p_2})\in \mathcal{MP}_{\bone}\times \mathcal{MP}_{\word_2} \mid \Phi (\emptyset,\widehat{p_2}) = \widehat{p}\right\}
        \\
        =&\, \#\left\{(\emptyset,\widehat{p_2})\in \mathcal{MP}_{\bone}\times \mathcal{MP}_{\word_2} \mid \widehat{p_2} = \widehat{p}\right\}
        \\
        =&\, \delta_{\word = \word_2}
        \\
        =&\, m_{\word_1,\word_2;\word}.
    \end{align}
    So, if~$\word_1 = \bone$, the claim follows. Similarly (using Proposition~\ref{prop:mwwwrecursion}(ii)), we obtain the claim for~$\word_2 = \bone$. Next, consider the case of~$\word_1,\word_2\in\mathcal{U}^{\ast,\circ}$ arbitrary and~$\word = \bone$. Then,
    \begin{align*}
        m_{\word_1,\word_2;\bone} = \delta_{\word_1 = \word_2 = \bone}.
    \end{align*}
    Furthermore, we have~$\mathcal{MP}_\word = \mathcal{MP}_\bone = \{\emptyset\}$. This implies that
    \begin{align}
        m_{\word_1,\word_2;\emptyset} =&\, \#\left\{(\widehat{p_1},\widehat{p_2})\in \mathcal{MP}_{\word_1}\times \mathcal{MP}_{\word_2} \mid \Phi (\widehat{p_1},\widehat{p_2}) = \emptyset\right\}
        \\
        =&\, \#\left\{(\widehat{p_1},\widehat{p_2})\in \mathcal{MP}_{\word_1}\times \mathcal{MP}_{\word_2} \mid \widehat{p_1} = \widehat{p_2} = \emptyset\right\}
        \\
        =&\, \delta_{\word_1 = \word_2 = \bone}
        \\
        =&\, m_{\word_1,\word_2;\bone},
    \end{align}
    where the last step follows from Proposition~\ref{prop:mwwwrecursion}(iii). Therefore, the claim follows also for all~$\word_1,\word_2\in\mathcal{U}^{\ast,\circ}$ when~$\word=\bone$.

    Therefore, we may assume that~$\word_1, \word_2, \word \neq \bone$ in the following. We write: \begin{align*} \word_1 = \word_1' u_{j_1} u_0^{n_1}, \quad \word_2 = \word_2' u_{j_2} u_0^{n_2}, \quad \word = \word' u_{j_3} u_0^{n_3}, \end{align*} where~$\word_1', \word_2', \word' \in \mathcal{U}^{\ast,\circ}$,~$j_1, j_2, j_3 \in \Z_{>0}$, and~$n_1, n_2, n_3 \in \Z_{\geq 0}$ are uniquely determined.

We prove the claim of the theorem by induction on~$\dep(\word)$. Note that the base case~$\dep(\word) = 0$ has already been proven, as in that case~$\word = \bone$. Therefore, we assume~$\dep(\word) > 0$ and that the claim holds for all smaller values of~$\dep(\word)$.

Let~$\widehat{p} \in \mathcal{MP}_\word$ be arbitrary. In particular, in~$(\widehat{p})_1$, there are exactly~$j_3$ rows marked and~$n_3 + 1$ columns, including the row and column that contain the corner. To determine the number~$m_{\word_1, \word_2; \widehat{p}}$, we need to count the pairs~$(\widehat{p_1}, \widehat{p_2}) \in \mathcal{MP}_{\word_1} \times \mathcal{MP}_{\word_2}$ of marked partitions such that~$\Phi(\widehat{p_1}, \widehat{p_2}) = \widehat{p}$. Specifically, we have
    \begin{align*}
        \left(\Phi(\widehat{p_1},\widehat{p_2})\right)_1 = (\widehat{p})_1.
    \end{align*}
    We consider three distinct cases:
    \begin{enumerate}
        \item $\smpart{\widehat{p_1}} < \smpart{\widehat{p_2}}$: The horizontal block of minimal length in~$\widehat{p_1}$ (ignoring column markings) constitutes the entire horizontal block of minimal length in~$\widehat{p}$ (also ignoring column markings).
        \item $\smpart{\widehat{p_1}} > \smpart{\widehat{p_2}}$: The horizontal block of minimal length in~$\widehat{p_2}$ (ignoring column markings) constitutes the entire horizontal block of minimal length in~$\widehat{p}$ (also ignoring column markings).
        \item $\smpart{\widehat{p_1}} = \smpart{\widehat{p_2}}$: The horizontal block of minimal length in~$\widehat{p}$ is formed by placing the horizontal block of minimal length in~$\widehat{p_1}$ above the horizontal block of minimal length in~$\widehat{p_2}$. In particular, we have~$(\widehat{p})_1 = \Phi((\widehat{p_1})_1, (\widehat{p_2})_1)$.
    \end{enumerate}
        \paragraph{\textbf{Case $\smpart{\widehat{p_1}} < \smpart{\widehat{p_2}}$.}} We aim to determine the number
    \begin{align*}
        \#\left\{(\widehat{p_1},\widehat{p_2})\in \mathcal{MP}_{\word_1}\times \mathcal{MP}_{\word_2} \mid \smpart{\widehat{p_1}} < \smpart{\widehat{p_2}},\ \Phi (\widehat{p_1},\widehat{p_2}) = \widehat{p}\right\}.
    \end{align*}
    Note that for~$(\widehat{p_1},\widehat{p_2})\in \mathcal{MP}_{\word_1}\times \mathcal{MP}_{\word_2}$ with~$\smpart{\widehat{p_1}} < \smpart{\widehat{p_2}}$ and~$\Phi(\widehat{p_1},\widehat{p_2}) = \widehat{p}$, the following holds:
    \begin{align*}
        \widehat{p} = \Phi(\widehat{p_1},\widehat{p_2}) = \Phi((\Phi(\widehat{p_1},\widehat{p_2}))_{-1},(\widehat{p})_1).
    \end{align*}
    Furthermore, in this case, we have
    \begin{align*}
        (\Phi(\widehat{p_1},\widehat{p_2}))_{-1} = \Phi\left((\widehat{p_1})_{-1},\widetilde{\widehat{p_2}}\right)\in\mathcal{MP}_{\word'},
    \end{align*}
    where~$\widetilde{\widehat{p_2}}$ is the marked partition~$\widehat{p_2}$ with the column markings in~$\cmark{\widehat{p}} \backslash \{\smpart{\widehat{p}}\}$ removed. Note that we have
    \begin{align*}
        (\widehat{p_1})_{-1}\in\mathcal{MP}_{\word_1'}\quad\text{and}\quad \widetilde{\widehat{p_2}}\in\mathcal{MP}_{\word_2'u_{j_2}u_0^{n_2-j-\varepsilon}},
    \end{align*}
    where~$j= \#\left((\cmark{\widehat{p}}\backslash\{\smpart{\widehat{p}}\})\cap\cmark{\widehat{p_2}}\right)$ and
    \begin{align*}
        \varepsilon = \begin{cases}
            0,\text{ if } \smpart{\widehat{p}}\not\in \cmark{\widehat{p_2}},
            \\
            1,\text{ if } \smpart{\widehat{p}}\in \cmark{\widehat{p_2}}.
        \end{cases}
    \end{align*}
    Now, fix~$0\leq j\leq n_2$,~$0\leq\varepsilon\leq\min\{1,n_2-j\}$ and marked partitions
    \begin{align*}
        \widehat{q_1}\in\mathcal{MP}_{\word_1'}\quad\text{and}\quad \widetilde{\widehat{q_2}}\in\mathcal{MP}_{\word_2'u_{j_2}u_0^{n_2-j-\varepsilon}}
    \end{align*}
    such that~$\smpart{\widehat{p}}<\smpart{\widetilde{\widehat{q_2}}}$ and
    \begin{align*}
        \widehat{q} := \Phi\left(\widehat{q_1},\widetilde{\widehat{q_2}}\right) = (\widehat{p})_{-1}\in\mathcal{MP}_{\word'}.
    \end{align*}
    Hence, we need to determine the number of pairs~$(\widehat{q}',\widehat{q_2})\in\mathcal{MP}_{u_{j_1}u_0^{n_1}}\times\mathcal{MP}_{\word_2}$ such that~$\smpart{\widehat{q}'} < \smpart{\widehat{q_2}}$,
    \begin{align*}
        \widehat{p} = \Phi(\Phi(\widehat{q_1},\widehat{q}'),\widehat{q_2}),
    \end{align*}
    and such that~$\widehat{q_2}$ without column markings in~$\cmark{\widehat{p}}\backslash\{\smpart{\widehat{p}}\}$ equals~$\widetilde{\widehat{q_2}}$ (in accordance with the notation above).
    Note that the underlying Young diagram of~$\widehat{q}'$ consists of exactly one horizontal block and is uniquely determined by~$(\widehat{p})_1$ together with the row markings of~$\widehat{q}'$. This condition is feasible only when~$j_1 = j_3$, which additionally implies~$\smpart{\widehat{q}'} = \smpart{\widehat{p}}$. Furthermore, since
    \begin{align*}
        \#(\cmark{\widehat{q}'}\backslash\{\smpart{\widehat{q}'}\}) = n_1,\quad \#(\cmark{\widehat{p}}\backslash\{\smpart{p}\}) = n_3,\quad\text{and}\quad\cmark{\widehat{q}'}\subset\cmark{\widehat{p}},
    \end{align*}
    we have~$\binom{n_3}{n_1}$ choices to determine~$\cmark{\widehat{q}'}$ and, consequently,~$\widehat{q}'$. Now, for fixed~$\widehat{q}'$, the marked partition~$\widehat{q_2}$ is determined up to the column markings in~$\cmark{\widehat{p}}$ by definition. The remaining~$n_3 - n_1$ column markings of~$\widehat{p}$ in~$\cmark{\widehat{p}} \backslash \{\smpart{\widehat{p}}\}$ must correspond to column markings of~$\widehat{q_2}$. Moreover, by the definition of~$\widetilde{\widehat{q_2}}$, the~$j - (n_3 - n_1)$ column markings of~$\widehat{q_2}$ that do not belong to~$\widetilde{\widehat{q_2}}$ must also be column markings of~$\widehat{q}'$, distinct from~$\smpart{\widehat{p}}$. This can be achieved in~$\binom{n_1}{j - (n_3 - n_1)}$ ways, which finally determines~$\widehat{q_2}$. Thus, we have proven that
    \begin{align}
        &\,\#\left\{(\widehat{p_1},\widehat{p_2})\in \mathcal{MP}_{\word_1}\times \mathcal{MP}_{\word_2} \mid \smpart{\widehat{p_1}} < \smpart{\widehat{p_2}},\ \Phi (\widehat{p_1},\widehat{p_2}) = \widehat{p}\right\}
        \\
        =&\, \sum\limits_{\substack{0\leq j\leq n_2\\ 0\leq\varepsilon\leq\min\{1,n_2-j\}}}\binom{n_3}{n_1}\binom{n_1}{j-(n_3-n_1)} m_{\word_1',\word_2'u_{j_2}u_0^{n_2-j-\varepsilon};(\widehat{p})_{-1}}\delta_{\substack{j_1 = j_3}}
        \\
        \label{eq:MPproof1}
        =&\, \sum\limits_{\substack{0\leq j\leq n_2\\ 0\leq\varepsilon\leq\min\{1,n_2-j\}}}\binom{n_3}{n_1}\binom{n_1}{j-(n_3-n_1)} m_{\word_1',\word_2'u_{j_2}u_0^{n_2-j-\varepsilon};\word'}\delta_{\substack{j_1 = j_3}},
    \end{align}
    where the last step follows from the induction step since~$\dep(\word') = \dep(\word) - 1$.

    \paragraph{\textbf{Case $\smpart{\widehat{p_1}} > \smpart{\widehat{p_2}}$.}} Analogously, we obtain
    \begin{align}
        &\,\#\left\{(\widehat{p_1},\widehat{p_2})\in \mathcal{MP}_{\word_1}\times \mathcal{MP}_{\word_2} \mid \smpart{\widehat{p_1}} > \smpart{\widehat{p_2}},\ \Phi (\widehat{p_1},\widehat{p_2}) = \widehat{p}\right\}
        \\
        \label{eq:MPproof2}
        =&\, \sum\limits_{\substack{0\leq j\leq n_1\\ 0\leq\varepsilon\leq\min\{1,n_1-j\}}}\binom{n_3}{n_2}\binom{n_2}{j-(n_3-n_2)} m_{\word_1'u_{j_1}u_0^{n_1-j-\varepsilon},\word_2';\word'}\delta_{\substack{j_2 = j_3}}.
    \end{align}
    \paragraph{\textbf{Case $\smpart{\widehat{p_1}} = \smpart{\widehat{p_2}}$.}} We aim to determine the number
    \begin{align*}
        \#\left\{(\widehat{p_1},\widehat{p_2})\in \mathcal{MP}_{\word_1}\times \mathcal{MP}_{\word_2} \mid \smpart{\widehat{p_1}} = \smpart{\widehat{p_2}},\ \Phi (\widehat{p_1},\widehat{p_2}) = \widehat{p}\right\}.
    \end{align*}
    Note that for~$(\widehat{p_1},\widehat{p_2})\in \mathcal{MP}_{\word_1}\times \mathcal{MP}_{\word_2}$ with~$\smpart{\widehat{p_1}} = \smpart{\widehat{p_2}}$ and~$\Phi(\widehat{p_1},\widehat{p_2}) = \widehat{p}$, the following holds:
    \begin{align*}
        (\widehat{p})_{-1} = \Phi((\widehat{p_1})_{-1},(\widehat{p_2})_{-1})\quad\text{and}\quad (\widehat{p})_1 = \Phi((\widehat{p_1})_1,(\widehat{p_2})_1).
    \end{align*}
    Furthermore, for~$h\in\{1,2\}$, we have
    \begin{align*}
        (\widehat{p_h})_{-1}\in\mathcal{MP}_{\word_h'},\quad (\widehat{p_h})_1\in\mathcal{MP}_{u_{j_h}u_0^{n_h}},\quad (\widehat{p})_{-1}\in\mathcal{MP}_{\word'},\quad\text{and}\quad (\widehat{p})_1\in\mathcal{MP}_{u_{j_3}u_0^{n_3}}.
    \end{align*}
    Now, for~$h\in\{1,2\}$, fix marked partitions~$\widehat{q_h}\in\mathcal{MP}_{\word_h'}$ such that
    \begin{align*}
        \Phi(\widehat{q_1},\widehat{q_2}) =\, (\widehat{p})_{-1}.
    \end{align*}
    We are interested in the number of pairs~$(\widehat{q_1}',\widehat{q_2}')\in\mathcal{MP}_{u_{j_1}u_0^{n_1}}\times\mathcal{MP}_{u_{j_2}u_0^{n_2}}$ such that
    \begin{align*}
        \Phi(\widehat{q_1}',\widehat{q_2}') = (\widehat{p})_1.
    \end{align*}
    Note that the underlying Young diagram of both~$\widehat{q_1}'$ and~$\widehat{q_2}'$ are uniquely determined by~$(\widehat{p})_1$. Also the row markings of~$\widehat{q_1}'$ and~$\widehat{q_2}'$ are determined by~$(\widehat{p})_1$. In particular, this is possible only if~$j_3 = j_1+j_2$, which implies~$\smpart{\widehat{q_1}'} =\smpart{\widehat{q_2}'} = \smpart{\widehat{p}}$. 
    
    Now, since
    \begin{align*}
        \#(\cmark{\widehat{q_1}'}\backslash\{\smpart{\widehat{q_1}'}\}) = n_1,\quad \#(\cmark{\widehat{p}}\backslash\{\smpart{p}\}) = n_3,\quad\text{and}\quad \cmark{\widehat{q_1}'}\subset\cmark{\widehat{p}},
    \end{align*}
    we have~$\binom{n_3}{n_1}$ ways to determine the column markings of~$\widehat{q_1}'$ which determines~$\widehat{q_1}'$. Furthermore, since~$\cmark{\widehat{q_1}'} \cup \cmark{\widehat{q_2}'} = \cmark{(\widehat{p})_1}$, it follows that~$n_2 - (n_3 - n_1)$ column markings of~$\widehat{q_2}'$, distinct from~$\smpart{\widehat{q_2}'}$, also belong to~$\widehat{q_1}'$. This is possible in~$\binom{n_1}{n_2 - (n_3 - n_1)}$ ways when~$\widehat{q_1}'$ is already determined, thereby determining~$\widehat{q_2}'$.
    
    Hence, by using~$\widehat{p} = \Phi((\widehat{p})_{-1},(\widehat{p})_1)$, we have proven
    
    \begin{align}
        &\,\#\left\{(\widehat{p_1},\widehat{p_2})\in \mathcal{MP}_{\word_1}\times \mathcal{MP}_{\word_2} \mid \smpart{\widehat{p_1}} = \smpart{\widehat{p_2}},\ \Phi (\widehat{p_1},\widehat{p_2}) = \widehat{p}\right\}
        \\
        =&\, \binom{n_3}{n_1}\binom{n_1}{n_2-(n_3-n_1)}m_{\word_1',\word_2';(\widehat{p})_{-1}}\delta_{\substack{j_3 = j_1+j_2}}
        \\
        \label{eq:MPproof3}
        =&\, \binom{n_3}{n_1}\binom{n_1}{n_2-(n_3-n_1)}m_{\word_1',\word_2';\word'}\delta_{\substack{j_3 = j_1+j_2}},
    \end{align}
    where the last step follows from the induction hypothesis since~$\dep(\word') = \dep(\word) - 1$.

    \paragraph{\textbf{Conclusion.}} Now,~\eqref{eq:MPproof1},~\eqref{eq:MPproof2}, and~\eqref{eq:MPproof3} imply by definition of~$m_{\word_1,\word_2;\widehat{p}}$ that
    \begin{align*}
        &\,m_{\word_1,\word_2;\widehat{p}}
        \\
        =&\, \#\left\{(\widehat{p_1},\widehat{p_2})\in \mathcal{MP}_{\word_1}\times \mathcal{MP}_{\word_2} \mid \smpart{\widehat{p_1}} < \smpart{\widehat{p_2}},\ \Phi (\widehat{p_1},\widehat{p_2}) = \widehat{p}\right\}
        \\
        &\, + \#\left\{(\widehat{p_1},\widehat{p_2})\in \mathcal{MP}_{\word_1}\times \mathcal{MP}_{\word_2} \mid \smpart{\widehat{p_1}} > \smpart{\widehat{p_2}},\ \Phi (\widehat{p_1},\widehat{p_2}) = \widehat{p}\right\}
        \\
        &\, +\#\left\{(\widehat{p_1},\widehat{p_2})\in \mathcal{MP}_{\word_1}\times \mathcal{MP}_{\word_2} \mid \smpart{\widehat{p_1}} = \smpart{\widehat{p_2}},\ \Phi (\widehat{p_1},\widehat{p_2}) = \widehat{p}\right\}
        \\
        =&\, \sum\limits_{\substack{0\leq k\leq j\leq n_2\\ 0\leq\varepsilon\leq \min\{1,n_2-j\}}} \binom{n_1+k}{n_1}\binom{n_1}{j-k}   m_{\word_1',\word_2'u_{j_2}u_0^{n_2-j-\varepsilon};\word'} \delta_{\substack{j_1 = j_3}}\delta_{n_1+k = n_3}
        \\
        &\,+\sum\limits_{\substack{0\leq k\leq j\leq n_1\\ 0\leq\varepsilon\leq \min\{1,n_1-j\}}} \binom{n_2+k}{n_2}\binom{n_2}{j-k}  m_{\word_1'u_{j_1}u_0^{n_1-j},\word_2';\word'} \delta_{\substack{j_2=j_3}}\delta_{n_2+k = n_3} 
        \\
        &\,+\sum\limits_{k=0}^{n_2} \binom{n_1+k}{n_1} \binom{n_1}{n_2-k} m_{\word_1',\word_2';\word'}\delta_{\substack{j_1+j_2=j_3}}\delta_{n_1+k=n_3}
        \\
        =&\, m_{\word_1,\word_2;\word}, 
    \end{align*}
    where the last step immediately follows from Proposition~\ref{prop:mwwwrecursion}(iv). This completes the induction step, so the theorem is proven.
\end{proof}

\begin{remark}
    Marked partitions appear to be a powerful tool for studying the coefficients in the~$q$-expansion of~$q$MZVs. They provide a combinatorial interpretation of the (algebraic) behavior of~\qmzv s. In this paper, we used them to describe the stuffle product. In~\cite{Br2}, we previously applied them to study duality in the Schlesinger–Zudilin model of~\qmzv s. Consequently, by Conjecture~\ref{conj:MPallrelsqversionApp}, we can now conjecturally describe all~$\Q$-linear relations among~\qmzv s at the level of marked partitions.

    For future work, it would be interesting to make further progress on proving problems related to the algebraic structure of~\qmzv s using marked partitions. One notable open problem is a conjecture by Bachmann~(\cite[Conjecture 4.3]{Ba1}), which asserts that the space~$\mathcal{Z}_q$ is spanned by~$\left\{u_{k_1}\cdots u_{k_\dd}\, |\, k_1,\dots,k_\dd \in\Z_{>0}\right\}.$
\end{remark}

\bibliographystyle{alpha}
\bibliography{example.bib}
\end{document}